\documentclass[12pt]{iopart}

\usepackage{iopams}

\usepackage{amsthm}

\usepackage{mathrsfs}
\usepackage{amssymb}

\newtheorem{theorem}{Theorem}
\newtheorem{lemma}[theorem]{Lemma}

\begin{document}

\note[Semi-symmetric spacetimes]{Note on (conformally) semi-symmetric spacetimes}

\author{Ingemar Eriksson$^1$ and Jos\'e M M Senovilla$^2$}

\address{$^1$ Google Switzerland GmbH, Brandschenkestrasse 110, CH-8002 Z\"urich, Switzerland}
\address{$^2$ F\'{\i}sica Te\'orica, Universidad del Pa\'{\i}s Vasco, Apartado 644, 48080 Bilbao, Spain}
\eads{\mailto{ineri3@gmail.com}, \mailto{josemm.senovilla@ehu.es}}
\begin{abstract}
We provide a simple proof that conformally semi-symmetric spacetimes are actually semi-symmetric. We also present a complete refined classification of the semi-symmetric spacetimes.
\end{abstract}

\pacs{04.20.Cv, 02.40.Ky, 04.20.Jb}

\section{Introduction}
Semi-symmetric spaces were introduced by Cartan \cite{C1}. They are characterized by the curvature condition
\begin{equation} \label{SemiSymmetric}
    \nabla_{[a}\nabla_{b]}R_{cdef} = 0,
\end{equation}
where $R_{cdef}$ denotes the Riemann tensor and round and square brackets enclosing indices indicate symmetrization and antisymmetrization, respectively. Geometrically, they satisfy the property that the sectional curvature relative to any tangent plane at any point remains invariant after parallel translation along an infinitesimal parallelogram \cite{HV,HV2}.

Semi-symmetric proper Riemannian manifolds were studied in \cite{Sz,Sz1}, and they are considered the natural generalization of locally symmetric spaces, i.e., those satisfying $\nabla_{b}R_{cdef} = 0$. This is not the case for general semi-Riemannian manifolds, where the intermediate condition
\begin{equation}
\nabla_{a}\nabla_{b}R_{cdef} = 0 \label{2sym}
\end{equation}
is actually feasible \cite{S2007,S2008}. Semi-Riemannian manifolds satisfying (\ref{2sym})  are called second-order symmetric, and are obviously semi-symmetric, but the converse is not true in general. Semi-symmetric spacetimes (4-dimensional Lorentzian manifolds) have been considered in \cite{HV} as a particular case of the more general pseudo-symmetric case \cite{DDVV}.

A semi-Riemannian manifold is said to be {\em conformally semi-symmetric} if the Weyl tensor $C_{abcd}$ satisfies
\begin{equation} \label{ConformallySemiSymmetric}
    \nabla_{[a}\nabla_{b]}C_{cdef} = 0;
\end{equation}
and {\em Ricci semi-symmetric} if the Ricci tensor $R_{ab}\equiv R^c{}_{acb}$ satisfies
\begin{equation} \label{RicciSemiSymmetric}
    \nabla_{[a}\nabla_{b]}R_{cd} = 0.
\end{equation}
It is obvious that semi-symmetric spaces are automatically both conformally and Ricci semi-symmetric. However, none of these two by itself should imply the former in general. Surprisingly, for dimensions greater than four conformal semi-symmetry implies semi-symmetry (for non-conformally flat spaces), see \cite{G,DG}.

However, in the four-dimensional proper Riemannian case there are examples of conformally semi-symmetric spaces which are not semi-symmetric (see Lemma 1.1 in \cite{D}). Thus, the question arises of what happens for other signatures in four dimensions. 
In this short note we want to consider the case of Lorentzian signature, providing a very simple proof the non-trivial result that  conformally semi-symmetric spacetimes (with non-zero Weyl tensor) are automatically Ricci semi-symmetric. Therefore, semi-symmetry and conformal semi-symmetry are essentially equivalent in this case. This equivalence is implicit in \cite{HV3} (see Theorem 95), but we would like to present herein a very simple and completely direct proof of the result. 

\section{Main results}
Even though the calculations can be carried out in tensor formalism, it is much more efficient to resort to spinorial techniques. Thus, we will use the standard nomenclature and conventions in \cite{Penrose1984, Penrose1986}, except for the scalar curvature $R\equiv R^c{}_c$, as we will not use the usual $\Lambda \equiv R/24$ notation.

\begin{lemma}
Conformally semi-symmetric spacetimes are of Petrov type \textbf{N}, with
\begin{equation} \label{ConditionN}
  \Psi_{ABCD} = \Psi_4 o_A o_B o_C o_D, \qquad
  \Phi_{ABA'B'} = \Phi_{22} o_A o_B o_{A'} o_{B'}, \qquad
  R = 0,
\end{equation}
or Petrov type \textbf{D}, with
\begin{equation} \label{ConditionD}
  \Psi_{ABCD} = 6 \Psi_2 o_{(A} o_B \iota_C \iota_{D)}, \quad
  \Phi_{ABA'B'} = 4 \Phi_{11} o_{(A} \iota_{B)} o_{(A'} \iota _{B')}, \quad
  R = -12 \Psi_2.
\end{equation}
\end{lemma}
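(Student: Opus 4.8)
The plan is to write out the condition (\ref{ConformallySemiSymmetric}) explicitly in spinor form and extract algebraic constraints on the Weyl spinor $\Psi_{ABCD}$. The operator $\nabla_{[a}\nabla_{b]}$ acting on a tensor can be re-expressed, via the spinorial Ricci identities, in terms of the curvature operators $\Box_{AB}$ and $\Box_{A'B'}$ (in the notation of \cite{Penrose1984}), which act on spinor fields through the Weyl spinor $\Psi_{ABCD}$, the trace-free Ricci spinor $\Phi_{ABA'B'}$, and the scalar $R$. Since the Weyl tensor corresponds to the spinor $\Psi_{ABCD}\epsilon_{A'B'}\epsilon_{C'D'}$ plus its complex conjugate, applying $\Box_{AB}$ and $\Box_{A'B'}$ to it produces a homogeneous quadratic expression in the curvature spinors contracted with $\Psi_{ABCD}$. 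Setting this to zero gives the master equation.

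The key steps, in order, are as follows. First, translate (\ref{ConformallySemiSymmetric}) into the pair of spinor equations obtained by applying $\Box_{AB}$ and $\Box_{A'B'}$ to $\Psi_{CDEF}\epsilon_{C'D'}\epsilon_{E'F'}$; the primed-derivative equation will mix $\Psi$ with $\Phi$ and $R$, while the unprimed one gives a purely "self-dual" constraint. Second, isolate the leading algebraic piece, which will be of the schematic form $\Psi_{(AB}{}^{EF}\Psi_{CD)EF}=0$ together with cross-terms $\Psi_{ABCD}$ times $\Phi$ and $R$; the vanishing of the totally symmetrized square of the Weyl spinor, $\Psi_{(AB}{}^{EF}\Psi_{CD)EF}=0$, is exactly the algebraic condition that forces Petrov type \textbf{N} or \textbf{D} (this is the classical fact that the only algebraically special types whose Weyl spinor "squares to zero" under full symmetrization are \textbf{N}, where $\Psi_{ABCD}=\Psi_4 o_Ao_Bo_Co_D$, and \textbf{D}, where $\Psi_{ABCD}=6\Psi_2 o_{(A}o_B\iota_C\iota_{D)}$; types \textbf{III}, \textbf{II}, \textbf{I} are excluded). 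Third, in each of the two surviving cases substitute the canonical form of $\Psi_{ABCD}$ back into the remaining spinor equations and solve for the allowed $\Phi_{ABA'B'}$ and $R$: in the type \textbf{N} case the cross-terms force $\Phi_{ABA'B'}$ to be proportional to $o_Ao_Bo_{A'}o_{B'}$ and $R=0$; in the type \textbf{D} case one finds $\Phi_{ABA'B'}=4\Phi_{11}o_{(A}\iota_{B)}o_{(A'}\iota_{B')}$ and the specific trace relation $R=-12\Psi_2$ linking the scalar curvature to the single Weyl scalar.

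The main obstacle I anticipate is step three in the type \textbf{D} case: after imposing $\Psi_{(AB}{}^{EF}\Psi_{CD)EF}=0$ one still has a tensorial (spinorial) equation coupling $\Psi_2$, the components of $\Phi_{ABA'B'}$, and $R$, and one must argue that the only solution consistent with a genuine type \textbf{D} Weyl spinor (both repeated principal null directions $o_A$ and $\iota_A$ present, $\Psi_2\neq 0$) is the aligned form with the rigid coefficient $-12\Psi_2$. Decomposing $\Phi_{ABA'B'}$ on the dyad $\{o_A,\iota_A\}$ and reading off each dyad component of the master equation should make this bookkeeping routine, but it is where all the real content sits; everything before it is either a standard identity or a short case analysis. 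I would also take care that the Petrov classification input used—namely which types satisfy the quadratic Weyl identity—is the version appropriate to Lorentzian signature, since the whole point of the note is that signature matters.
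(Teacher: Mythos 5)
Your overall architecture coincides with the paper's: translate (\ref{ConformallySemiSymmetric}) into $\Box_{AB}\Psi_{CDEF}=0$ and $\Box_{A'B'}\Psi_{CDEF}=0$, extract a quadratic algebraic condition on $\Psi_{ABCD}$ that restricts the Petrov type, then feed the canonical forms back in to pin down $\Phi_{ABA'B'}$ and $R$. However, your stated key step contains a genuine error. The algebraic condition coming from the unprimed equation is ${\rm X}_{AB(C}{}^{G}\Psi_{DEF)G}=0$, which upon separating ${\rm X}_{ABCD}$ into $\Psi_{ABCD}$ plus its $\varepsilon$-trace part and contracting becomes the \emph{eigenvalue} equation $12\,\Psi_{(AB}{}^{EF}\Psi_{CD)EF}=R\,\Psi_{ABCD}$ --- not the vanishing condition $\Psi_{(AB}{}^{EF}\Psi_{CD)EF}=0$ that you assert "is exactly the algebraic condition that forces Petrov type \textbf{N} or \textbf{D}". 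The symmetrized square vanishes only for type \textbf{N}; for type \textbf{D} one has $\Psi_{(AB}{}^{EF}\Psi_{CD)EF}=-\Psi_2\,\Psi_{ABCD}\neq 0$. So the condition you wrote down would wrongly eliminate the entire type \textbf{D} branch of the lemma. It is precisely the $R\,\Psi_{ABCD}$ term on the right-hand side that allows type \textbf{D} to survive, and matching the eigenvalue is what produces the rigid relation $R=-12\Psi_2$ (and, for type \textbf{N}, $R=0$). If you carry out the $\Box_{AB}$ computation honestly you will land on the eigenvalue equation rather than your version, but as written the plan's central classification step is false.

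A smaller misattribution: the scalar curvature enters through the \emph{unprimed} equation (via ${\rm X}_{ABCD}$), while the primed equation is simply $\Phi_{A'B'(C}{}^{G}\Psi_{DEF)G}=0$ and involves $\Phi$ but not $R$; you have this the other way around. This matters for your "main obstacle": in the type \textbf{D} case the determination of $R$ and the determination of $\Phi_{ABA'B'}$ decouple into these two separate equations, so the bookkeeping is lighter than you anticipate. Your plan for the $\Phi$ part (decompose on the dyad $\{o_A,\iota_A\}$ and read off components) is exactly what the paper does, contracting $\Phi_{A'B'(C}{}^{G}\Psi_{DEF)G}=0$ with suitable products of $o$'s and $\iota$'s to kill all components except $\Phi_{22}$ (type \textbf{N}) or $\Phi_{11}$ (type \textbf{D}).
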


\begin{proof}
In spinors (\ref{ConformallySemiSymmetric}) is equivalent to
$\Box_{AB} \Psi_{CDEF} = 0$ and $\Box_{A'B'}\Psi_{CDEF} = 0$, or
\begin{eqnarray}
     {\rm X}_{AB(C}{}^G \Psi_{DEF)G} &= 0, \label{WeylCondition1} \\
      \Phi_{A'B'(C}{}^G \Psi_{DEF)G} &= 0. \label{WeylCondition2}
\end{eqnarray}
The first condition, (\ref{WeylCondition1}), can be rewritten as
\begin{equation}
   24  \Psi_{AB(C}{}^G \Psi_{DEF)G} = -R \left( \varepsilon_{A(C}\Psi_{DEF)B} +
                                            \varepsilon_{B(C}\Psi_{DEF)A} \right).
\end{equation}
Contracting this over $BC$ yields
\begin{equation}
    12 \Psi_{(AD}{}^{BG} \Psi_{EF)BG} = R\,  \Psi_{ADEF}.
\end{equation}
This can only be satisfied for Petrov types \textbf{D} and \textbf{N}\footnote{The reason is that, for a general $\Psi_{ABCD} = \alpha_{(A} \beta_B \gamma_C \delta_{D)}$, one gets non-vanishing terms like $\alpha_A \alpha_C \beta_B \beta_D \left(\gamma^E \delta_E \right)^2$ on the left-hand side which are not present in $\Psi_{ABCD}$.}.
For type \textbf{N},
$\Psi_{ABCD} = \Psi_4 o_A o_B o_C o_D$ and $R = 0$, while for type \textbf{D} we have
$\Psi_{ABCD} = 6\Psi_2 o_{(A}o_B\iota_C\iota_{D)}$ and $R = -12 \Psi_2$, see \cite{Penrose1986} p.261.
Condition (\ref{WeylCondition1}) can then be verified to be
satisfied in both cases.

For the second condition, (\ref{WeylCondition2}), in type \textbf{N} one gets $\Phi_{i0} = 0$ and
$\Phi_{i1} = 0$, for all $i=0,1,2$, after contracting with $o^C$ and $\iota^C \iota^D \iota^E \iota ^F$, respectively. Hence,
\begin{equation}
    \Phi_{ABA'B'} = \Phi_{22} o_A o_B o_{A'} o_{B'}.
\end{equation}
Similarly, for type \textbf{D}, contracting with $o^C o^D o^E$ and $\iota^A \iota^B \iota^C$ gives
\begin{equation}
    \Phi_{ABA'B'} = 4\Phi_{11}o_{(A}\iota_{B)}o_{(A'}\iota_{B')},
\end{equation}
which can then be checked to satisfy (\ref{WeylCondition2}).
\end{proof}

\begin{lemma}
Conformally semi-symmetric spacetimes are Ricci semi-symmetric.
\end{lemma}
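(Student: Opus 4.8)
The plan is to exploit the very rigid form of the curvature spinors obtained in Lemma 1. Having established that a conformally semi-symmetric spacetime is of Petrov type \textbf{N} or \textbf{D}, with the explicit expressions \eqref{ConditionN} or \eqref{ConditionD} for $\Psi_{ABCD}$ and $\Phi_{ABA'B'}$, I would translate the Ricci semi-symmetry condition \eqref{RicciSemiSymmetric} into spinor language. Since the scalar curvature is constant ($R=0$ in type \textbf{N}, and $R=-12\Psi_2$ in type \textbf{D}, which is constant because $\Psi_2$ is —this itself may need the contracted Bianchi identity or should be noted), the trace part drops out and \eqref{RicciSemiSymmetric} reduces to the statement that the operators $\Box_{AB}$ and $\Box_{A'B'}$ annihilate the trace-free Ricci spinor $\Phi_{ABA'B'}$, exactly as happened for $\Psi_{CDEF}$ in the proof of Lemma 1. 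Explicitly, I would need
\begin{eqnarray}
   {\rm X}_{AB(C}{}^{G}\Phi_{D)EA'B'} + \bar{\rm X}_{A'B'(A'}{}^{G'}\Phi_{ABC}{}_{|G'|B')} &= 0,\nonumber\\
   \Phi_{A'B'(C}{}^{G}\Phi_{D)EA'B'} + \text{(primed analogue)} &= 0,\nonumber
\end{eqnarray}
(schematically — the precise index structure of $\Box$ acting on a mixed-valence spinor is what I would look up in \cite{Penrose1984}).

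The key computational step is then to substitute the type-\textbf{N} and type-\textbf{D} forms from Lemma 1 directly into these $\Box$-operator identities. In type \textbf{N}, with $\Phi_{ABA'B'} = \Phi_{22}\,o_A o_B o_{A'}o_{B'}$ and $\Psi_{ABCD}=\Psi_4\,o_A o_B o_C o_D$, every term built from $\Box$ acting on $\Phi$ involves symmetrizing a spinor that is already a fourth power of $o_A$ (times its primed counterpart) against either $\Psi_{ABCD}$ or $\Phi_{ABA'B'}$, both of which are themselves pure $o$-powers; the symmetrizations collapse because $o_{(A}o_{B)}\cdots$ with all slots filled by $o$ is totally symmetric already, and any contraction $o^A o_A = 0$ kills the rest. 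So \eqref{RicciSemiSymmetric} holds trivially. In type \textbf{D} the same principle applies: $\Phi_{ABA'B'} = 4\Phi_{11}\,o_{(A}\iota_{B)}o_{(A'}\iota_{B')}$ and $\Psi_{ABCD} = 6\Psi_2\,o_{(A}o_B\iota_C\iota_{D)}$ share the same pair of principal spinors, and the algebra of symmetrized products of $o_{(A}\iota_{B)}$-type terms with themselves closes —this is precisely the computation "which can then be checked to satisfy \eqref{WeylCondition2}" referred to at the end of Lemma 1, so I would expect the verification here to be of comparable length and to reduce to a couple of scalar identities in $\Psi_2$ and $\Phi_{11}$.

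The main obstacle, as usual in this kind of argument, is bookkeeping rather than conceptual: getting the exact form of $\Box_{AB}$ and $\Box_{A'B'}$ acting on a valence-$(2,2)$ spinor right, keeping the symmetrizations and the $\varepsilon$-contractions straight, and making sure the "constant $R$" claim in type \textbf{D} is actually justified (it follows because once $\Psi_{ABCD}$ has the stated algebraic form, the Bianchi identities force $\Psi_2$ to be covariantly controlled —or one simply notes that $R$ is constant is not needed beyond $\nabla_{[a}\nabla_{b]}R_{cd}$, where the pure-trace piece $\tfrac{R}{12}\nabla_{[a}\nabla_{b]}(\text{metric-type term})$ vanishes identically anyway since $\nabla$ kills the metric). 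A clean way to package the whole thing is to observe that $\Box_{AB}$ and $\Box_{A'B'}$ are both "curvature" operators whose action on any spinor is a sum of Clebsch–Gordan-type products with $\Psi$, $\Phi$ and $R$; since in our two cases $\Psi$ and $\Phi$ are built from a single null direction (type \textbf{N}) or a single pair of principal null directions (type \textbf{D}), and $\Phi$ lies in the same one- or two-dimensional algebraic orbit, the products that appear when acting on $\Phi$ are forced into exactly the same orbit and the antisymmetrized second derivative of $\Phi_{ABA'B'}$ vanishes by the same pigeonhole/symmetry count used in the footnote of Lemma 1. I would then conclude that \eqref{RicciSemiSymmetric} holds, i.e. the spacetime is Ricci semi-symmetric, and hence —being conformally semi-symmetric and Ricci semi-symmetric with the curvature fully decomposed into these two pieces plus the constant scalar— actually semi-symmetric.
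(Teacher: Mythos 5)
Your proposal follows essentially the same route as the paper: translate \eqref{RicciSemiSymmetric} into the vanishing of $\Box_{AB}\Phi_{CDC'D'}$ (the pure-trace part dropping out identically), substitute the type-\textbf{N} and type-\textbf{D} forms from Lemma 1, and verify the resulting algebraic identities. Your schematic $\Box$-formulas would need to be replaced by the correct ones, but the strategy and the key observations --- the factorized form of $\Phi_{ABA'B'}$, the shared principal spinors, and the relations $R=0$ or $R=-12\Psi_2$ entering the cancellation in type \textbf{D} --- match the paper's proof.
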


\begin{proof}
In spinors (\ref{RicciSemiSymmetric}) is equivalent to
$\varepsilon_{A'B'}\Box_{AB} \Phi_{CDC'D'} + \varepsilon_{AB}\Box_{A'B'} \Phi_{CDC'D'}= 0$. We show that
$\Box_{AB} \Phi_{CDC'D'} = 0$, or equivalently that
\begin{equation}
    {\rm X}_{ABC}{}^E \Phi_{EDC'D'} +  {\rm X}_{ABD}{}^E \Phi_{CEC'D'} +
    \Phi_{ABC'}{}^{E'} \Phi_{CDE'D'} + \Phi_{ABD'}{}^{E'} \Phi_{CDC'E'}  = 0.
\end{equation}

The two last terms vanish here, since in both cases, (\ref{ConditionN}) and (\ref{ConditionD}),
$\Phi_{ABA'B'}$ is of the form $\Phi_{AB}\bar\Phi_{A'B'}$, and expanding the other two terms gives
\begin{eqnarray*}
 & 24  \Psi_{ABC}{}^E \Phi_{EDC'D'} + 24 \Psi_{ABD}{}^E \Phi_{CEC'D'} \\
  +&R
          \left( \varepsilon_{AC}\Phi_{BDC'D'} + \varepsilon_{BC}\Phi_{ADC'D'}
                +\varepsilon_{AD}\Phi_{BCC'D'} + \varepsilon_{BD}\Phi_{ACC'D'} \right) = 0.
\end{eqnarray*}
This is trivially satisfied for type \textbf{N}, while substitution of (\ref{ConditionD}) shows that it is also satisfied for type \textbf{D}.
\end{proof}

The above two lemmas imply our main result:
\begin{theorem}
In four-dimensional ---non-conformally flat--- spacetimes, conformal semi-symmetry is equivalent to semi-symmetry,
\begin{equation}
  \nabla_{[a}\nabla_{b]}C_{cdef} = 0 \qquad \Longleftrightarrow \qquad \nabla_{[a}\nabla_{b]}R_{cdef} = 0 .
\end{equation}
Furthermore, the semi-symmetric spacetimes are of Petrov types \textbf{D}, \textbf{N}, or \textbf{O}.
\end{theorem}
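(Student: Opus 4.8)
The plan is to obtain the Theorem as a direct consequence of the two Lemmas above, combined with the purely algebraic decomposition of the Riemann tensor into its Weyl, trace-free Ricci and scalar-curvature parts. In four dimensions this reads
\[
  R_{abcd} = C_{abcd} + g_{a[c}R_{d]b} - g_{b[c}R_{d]a} - \frac{1}{3}\,R\, g_{a[c}g_{d]b},
\]
so that $R_{abcd}-C_{abcd}$ is built linearly out of $R_{ab}$, of the scalar curvature $R$, and of the metric alone. Applying $\nabla_{[e}\nabla_{f]}$ to this identity and using that $\nabla$ is metric and torsion-free --- so that $\nabla_{[e}\nabla_{f]}g_{ab}=0$ and $\nabla_{[e}\nabla_{f]}R=0$, the latter simply because $R$ is a scalar --- the term built from the metric alone drops out and the terms linear in $R_{ab}$ enter only through $\nabla_{[e}\nabla_{f]}R_{ab}$. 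One is left with
\[
  \nabla_{[e}\nabla_{f]}R_{abcd} = \nabla_{[e}\nabla_{f]}C_{abcd}
   + g_{a[c}\,\nabla_{[e}\nabla_{f]}R_{d]b} - g_{b[c}\,\nabla_{[e}\nabla_{f]}R_{d]a}.
\]

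From this identity both implications follow at once. If the spacetime is conformally semi-symmetric, then $\nabla_{[e}\nabla_{f]}C_{abcd}=0$ and, by Lemma 2, $\nabla_{[e}\nabla_{f]}R_{ab}=0$; hence the whole right-hand side vanishes and the spacetime is semi-symmetric. Conversely, if it is semi-symmetric then contracting $\nabla_{[e}\nabla_{f]}R_{abcd}=0$ on $a$ and $c$ gives $\nabla_{[e}\nabla_{f]}R_{bd}=0$, and substituting back yields $\nabla_{[e}\nabla_{f]}C_{abcd}=0$, which is the (already evident) reverse direction. This proves the stated equivalence in the non-conformally flat case. For the Petrov classification, a semi-symmetric spacetime is in particular conformally semi-symmetric; if its Weyl tensor is non-zero, Lemma 1 forces it to be of Petrov type \textbf{N} or \textbf{D}, while if the Weyl tensor vanishes identically it is of type \textbf{O} by definition. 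Hence a semi-symmetric spacetime can only be of type \textbf{D}, \textbf{N} or \textbf{O}.

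The whole argument is essentially bookkeeping once Lemmas 1 and 2 are in hand, so there is no serious obstacle; the only point deserving a word of care is the second displayed identity, in particular the observation that the scalar-curvature term contributes nothing precisely because $\nabla_{[e}\nabla_{f]}$ annihilates scalars. Equivalently, one can run everything spinorially: $\nabla_{[e}\nabla_{f]}R_{abcd}=0$ decouples into separate $\Box_{AB}$-conditions on $\Psi_{ABCD}$, on $\Phi_{ABA'B'}$ and on $\Lambda$, the last being vacuous, and Lemmas 1 and 2 already exhaust the two non-trivial pieces.
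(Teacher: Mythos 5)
Your proposal is correct and follows essentially the same route as the paper, which simply asserts that the Theorem follows from the two Lemmas: the mechanism is exactly the decomposition of the Riemann tensor into Weyl, Ricci and scalar-curvature parts (equivalently, the spinor split into $\Psi_{ABCD}$, $\Phi_{ABA'B'}$ and $\Lambda$), with the scalar piece contributing nothing because $\nabla_{[a}\nabla_{b]}$ annihilates scalars and the metric. You merely make explicit the bookkeeping the paper leaves implicit, and your handling of both implications and of the Petrov-type statement is sound.
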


\section{Classification}
A classification of semi-symmetric spacetimes was given in \cite{HV} in the context of pseudo-symmetric spacetimes. However, this classification only considered the algebraic restrictions arising from the condition (\ref{SemiSymmetric}), without analyzing the compatibility differential conditions derived thereof.

Here, we present a complete classification of the (conformally) semi-symmetric spacetimes. From the previous results, there are two main possibilities, types \textbf{D} and \textbf{N}. Then we have:
\subsection{Type \textbf{D}}
Taking into account the previous results the Ricci tensor can be written as
$$
R_{ab}=A k_{(a}\ell_{b)} + B m_{(a}\bar{m}_{b)},
$$
where $k_a =o_A o_{A'}$ and $\ell_a =\iota_A \iota_{A'}$ are the two multiple principal null directions and $m_a=o_A\iota_{A'}$ is a complex null vector completing the usual null tetrad. The scalars $A$ and $B$ are proportional to $3\Psi_2\pm 2\Phi_{11}$, respectively.

Observe that $\Psi_2$ is real, so that these spacetimes are purely electric \cite{Exact} with respect to the timelike direction defined by $k_a+\ell_a$.

The Bianchi identities written in the Newman-Penrose (NP) formalism \cite{NP,Exact} provide easily the following necessary conditions
\begin{eqnarray}
A\sigma = A \lambda = A \mu =A \rho =0, \label{A}\\
B \kappa = B \nu = B \pi = B \tau =0 \, .
\end{eqnarray}
The {\em generic case} is given by $A\neq 0 \neq B$, which leads to $\sigma = \lambda = \mu = \rho = \kappa = \nu = \pi = \tau =0$. This immediately implies that $\nabla_a k_b = v_a k_b$ and $\nabla_a\ell_b= -v_a \ell_b$ for some vector field $v_a$. In other words, the two principal null directions are recurrent. It follows that the tensor $k_a \ell_b$ is covariantly constant and therefore that the spacetime is 2 $\times$ 2 decomposable, see e.g.  \cite{S2008,Exact}. Thus, the generic type-\textbf{D} semi-symmetric spacetimes are precisely the 2 $\times$ 2 decomposable ones.

The special cases are given by $3\Psi_2\pm 2\Phi_{11}=0$.

If $2\Phi_{11} = - 3\Psi_2$ ($A=0$) we get $\kappa=\nu=\tau=\pi = 0$. The principal null directions are geodesic but not necessarily shear-free. One can then check, with some patience,  that all remaining NP equations are compatible.

If $2\Phi_{11} = 3\Psi_2$ ($B=0$) we get $\rho = \mu= \sigma = \lambda= 0$, so that in general the principal null directions are not necessarily geodesic. Again all remaining NP equations are compatible. (In this case it is worth mentioning that, as one can easily see, the Einstein tensor $G_{ab}= -Am_{(a}\bar{m}_{b)}$ can never satisfy the dominant energy condition.)

Note finally that 1 $\times$ 3 decomposable spacetimes which are the product of a real line times a semi-symmetric 3-dimensional Riemannian manifold belong to these cases, because all static spacetimes are necessarily of Petrov types \textbf{I}, \textbf{D}, or \textbf{0}.

\subsection{Type \textbf{N}}
In this case, the Ricci tensor takes the null radiation form
$$
R_{ab} =2 \Phi_{22} k_a k_b \, .
$$
The Bianchi identities in the NP form provide the following conditions
\begin{eqnarray*}
\kappa =0, \qquad \sigma \Psi_4 =\rho \Phi_{22}
\end{eqnarray*}
together with other differential relations.  Thus, in general the unique principal null direction $k^a$ is geodesic but can be shearing, twisting and expanding. The remaining Bianchi identities and the rest of the NP equations can be seen to be compatible.

The second-order symmetric spacetimes ---characterized by (\ref{2sym})--- which are not locally symmetric must have a covariantly constant null vector field \cite{S2008}, from where one can infer that they belong to the particular class of  type-\textbf{N} semi-symmetric spacetimes with $\sigma =\rho =0$, so that $\Psi_4$ and $\Phi_{22}$ are in that case independent.

\section*{Acknowledgements}
We are grateful to S Haesen for bringing to our attention important references and relevant results, and to M S\'anchez for some comments. JMMS is supported by grants FIS2004-01626 (MICINN) and GIU06/37 (UPV/EHU).

\section*{References}

\end{document}